\documentclass[11pt]{amsart}
\RequirePackage[l2tabu, orthodox]{nag}
\usepackage[T1]{fontenc}
\usepackage[utf8]{inputenc}
\usepackage{lmodern}
\usepackage[english]{babel}
\usepackage{tikz}
\usetikzlibrary{calc}
\usepackage{amsmath}
\usepackage{amsfonts}
\usepackage{amssymb}
\usepackage{amsthm}
\usepackage{enumerate}
\usepackage{multicol}
\usepackage{appendix}
\usepackage[most]{tcolorbox}

\newtcolorbox{mytextbox}[1][]{%
	sharp corners,
	enhanced,
	colback=white,
	height=20.5cm,
	attach title to upper,
	#1
}

\usepackage[margin=2.5cm]{geometry}
\newtheorem{theorem}{Theorem}[section]
\newtheorem{lemma}[theorem]{Lemma}

\newtheorem*{claim*}{Claim}

\newtheorem{conjecture}[theorem]{Conjecture}
\newtheorem{theoremA}{Theorem}

\newtheorem*{theorem*}{Theorem}
\newtheorem*{question*}{Question}
\newtheorem{corollary}[theorem]{Corollary}
\newtheorem*{corollary*}{Corollary}

\newtheorem{question}[theorem]{Question}

\theoremstyle{definition}
\newtheorem{definition}[theorem]{Definition}
\newtheorem*{definition*}{Definition}

\theoremstyle{remark}

\newtheorem{remark}[theorem]{Remark}
\newtheorem*{acknowledgment*}{Acknowledgment}

\numberwithin{equation}{section}
\numberwithin{figure}{section}

\def\NN{{\mathbb N}}

\def\RR{{\mathbb R}}

\def\ZZ{{\mathbb Z}}

\def\idid{{\mathrm{id}}}
\def\aff{{\mathrm{Aff}}}
\def\homeo{{\mathrm{Homeo}}}
\def\diffeo{{\mathrm{Diff}}}

\def\isom{{\mathrm{Isom}}}
\def\fix{{\mathrm{Fix}}}

\newcommand{\eqnum}{\refstepcounter{equation}\textup{\tagform@{\theequation}}}

\usepackage{indentfirst}

\usepackage[colorlinks=true,linkcolor=blue,citecolor=magenta]{hyperref}

\title[Groups acting on the line with at most $2$ fixed points]{Groups acting on the line with at most $2$ fixed points: an extension of Solodov's theorem}

\author{Jo\~ao Carnevale}
\date{\today}
\keywords{group actions on the line, affine actions, maps with at most 2 fixed points}

\begin{document}
 
\maketitle

\begin{abstract}
	A classical result by Solodov states that if a group acts on the line such that any non-trivial element has at most one fixed point, then the action is either abelian or semi-conjugate to an affine action. We show that the same holds if we relax the assumption, requiring that any non-trivial element has at most 2 fixed points.
	
	\smallskip
	
	\noindent {\footnotesize \textbf{MSC 2020:} Primary 37C85, 57M60. Secondary 37B05, 37E05.}
\end{abstract}

\section{Introduction}

	Let $X$ be a topological space, and $G\le \homeo(X)$ a subgroup of homeomorphisms of $X$. Given $N\in\NN$, we say that $G$ \emph{has at most $N$ fixed points} if every non-trivial element of $G$ has at most $N$ fixed points.
	When $N=0$, we simply say that the action of $G$ on $X$ is \emph{free}.
A natural problem is to characterize actions with at most $N$ fixed points: 
\begin{question}\label{q.general}
	Given $N\in\NN$, which subgroups of $\homeo(X)$ have at most $N$ fixed points?
\end{question}

Here we will address this problem for the case when $X=\RR$ and $N=2$, extending classical works by H\"older (discussing the case $N=0$), and Solodov (case $N=1$).
These classical results essentially state that the only examples of groups with such property are the classical ones, namely the group  $\isom_+(\RR)$ of translations ($N=0$) and the group  $\mathrm{Aff}(\mathbb{R})$ of affine transformations of $\RR$ ($N=1$).  To state the results, given a subgroup $G\le \homeo(\RR)$, we denote by $G_+=G\cap \homeo_+(\RR)$ its normal subgroup (of index at most 2) of elements preserving the orientation of $\RR$.

\begin{theorem}[H\"older]\label{t.holderR} Every subgroup $G\leq \homeo(\mathbb{R})$ acting freely on $\mathbb{R}$  is abelian and moreover its action is semi-conjugate to an action by translations.
\end{theorem}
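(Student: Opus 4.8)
The plan is to let the freeness of the action impose an Archimedean bi-invariant order on $G$, and then run the classical linearization argument of H\"older for ordered groups. First, note that an orientation-reversing homeomorphism of $\RR$ is a strictly decreasing bijection, so $x \mapsto g(x) - x$ takes both signs and $g$ has a fixed point; since the action is free, $G$ contains no such element and $G = G_+ \le \homeo_+(\RR)$. Next, for nontrivial $g \in G$ we have $\fix(g) = \emptyset$, so by continuity of $x \mapsto g(x) - x$ either $g(x) > x$ for all $x$ (call $g$ \emph{positive}) or $g(x) < x$ for all $x$. Declaring $g \succ h$ when $gh^{-1}$ is positive, equivalently $g(x) > h(x)$ for every $x$, yields a total order on $G$ that is invariant under both left and right translation.

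The crucial point is that $\succ$ is Archimedean: if $g$ is positive and $h \in G$ is arbitrary, then $g^n \succ h$ for some $n \in \NN$. Indeed, fix $x_0 \in \RR$; the sequence $\bigl(g^n(x_0)\bigr)_n$ is strictly increasing, and were it bounded above its limit would be fixed by $g$, which is impossible. Hence $g^n(x_0) \to +\infty$, so $g^n(x_0) > h(x_0)$ for large $n$, and applying the positive/negative dichotomy to $g^n h^{-1}$ (positive at the point $h(x_0)$, hence everywhere) gives $g^n \succ h$. Now I would reproduce the standard construction: fixing a positive $g_0$, define $\phi \colon G \to \RR$ by $\phi(g) = \sup\{\, p/q : q \in \NN,\ g_0^{\,p} \preceq g^{q} \,\}$. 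Using bi-invariance (so that $a \preceq b$ implies $a^q \preceq b^q$) together with the Archimedean property, one checks that $\phi$ is finite, order-preserving and additive, and that $\ker\phi = \{\idid\}$, again because the order is Archimedean. Thus $\phi$ embeds $G$ into $(\RR, +)$, and in particular $G$ is abelian.

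Finally I would build the semi-conjugacy. If $G$ is trivial there is nothing to prove; otherwise fix $x_0$ and set $h(y) = \sup\{\, \phi(g) : g(x_0) \le y \,\}$. The orbit $G \cdot x_0$ is unbounded in both directions (iterate a positive element and its inverse), so this supremum is taken over a nonempty set and is finite because $\phi$ is order-preserving; and $h$ is non-decreasing. Reindexing the supremum gives $h(g(y)) = \phi(g) + h(y)$ for all $g \in G$, $y \in \RR$, so $h$ intertwines the $G$-action with the translation action $t \mapsto t + \phi(g)$. To turn this into an honest (continuous, monotone, proper) semi-conjugacy I would replace the step function $h$ by its affine interpolation on the complementary intervals of $\overline{G \cdot x_0}$; this is $G$-equivariant because those intervals have trivial stabilizer (freeness again), and the interpolation is unnecessary when $\phi(G)$ is dense. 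The main obstacle is the middle paragraph: proving that $\succ$ is Archimedean and that the resulting $\phi$ is a genuine homomorphism. Once $\phi$ is in hand, commutativity is immediate and the semi-conjugacy is bookkeeping.
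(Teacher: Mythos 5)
The paper does not prove Theorem \ref{t.holderR}: it is quoted as a classical result, with the proof delegated to H\"older's original paper, Conrad, and Ghys's survey. Your sketch is precisely the standard argument from those references --- freeness forces $G\le\homeo_+(\RR)$ and yields a total, bi-invariant, Archimedean order, after which H\"older's linearization embeds $G$ into $(\RR,+)$ and the translation-number map gives the semi-conjugacy --- so you are on the canonical route rather than a new one. The steps you actually carry out (orientation-reversing maps have fixed points; the positive/negative dichotomy; transitivity and bi-invariance of $\succ$; the Archimedean property via unboundedness of $(g^n(x_0))_n$; the equivariance identity $h(g(y))=\phi(g)+h(y)$ and the interpolation on gaps) are all correct.

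The one caveat worth recording: the step you defer, additivity of $\phi$, is the actual content of H\"older's theorem, and the hint you give for it (bi-invariance yields $a\preceq b\Rightarrow a^q\preceq b^q$) does not by itself close it. The difficulty is that $\phi(gh)$ is controlled by powers $(gh)^q$, which one must compare with $g^qh^q$ \emph{before} commutativity is known; the classical proofs handle this either by first deriving commutativity directly from the Archimedean property (H\"older's original route) or by a subadditivity/quasimorphism estimate on the integers $k_n$ with $g_0^{k_n}\preceq g^n\prec g_0^{k_n+1}$. Since you explicitly flag this as the remaining obstacle and it is exactly what the cited sources supply, the proposal is a correct outline, but that middle paragraph is where all the work of the theorem lives, not a routine verification.
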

Actually, H\"older's original paper \cite{Holder} was only considering abelian groups, and the general statement was later extended by Conrad to any group in \cite[Section 3.8]{Conrad} (see also \cite[Theorem 6.10]{ghys-circle} for a more recent exposition).

\begin{theorem}[Solodov]\label{t.solodov} Let $G\leq \homeo(\mathbb{R})$ be a subgroup with at most $1$ fixed point.  Then 
	\begin{itemize}
		\item either the action of $G$ admits a unique fixed point and $G_+$ is abelian, or
		\item the actions of $G$ is semi-conjugate to an action by affine transformations. 
	\end{itemize}
\end{theorem}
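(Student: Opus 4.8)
The natural plan is to split according to whether the $G$-action has a global fixed point. Suppose first that $\fix(G)\neq\emptyset$. Two distinct global fixed points would be fixed by every non-trivial element, contradicting the hypothesis; hence $\fix(G)$ is a single point $p$, and every non-trivial $g\in G$ has $p$ as its only fixed point. Consequently $G_+$ acts freely on the ray $(p,+\infty)$, and the restriction homomorphism $G_+\to\homeo_+\big((p,+\infty)\big)$ is injective, since a non-trivial kernel element would fix an entire ray, hence more than one point. By Theorem~\ref{t.holderR} the image is abelian, so $G_+$ is abelian, and we are in the first alternative.

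From now on assume $G$ has no global fixed point. First I note that then every $G$-orbit is unbounded above and below: the supremum of a bounded orbit is fixed by all of $G$. Hence there is a minimal non-empty closed $G$-invariant set $\Lambda$. If $\Lambda$ is discrete it is a single orbit order-isomorphic to $\ZZ$, forcing $G\cong\ZZ$ generated by a fixed-point-free element, hence semi-conjugate to a group of translations $\le\aff(\RR)$. Otherwise $\Lambda$ is perfect, and collapsing the connected components of $\RR\setminus\Lambda$ produces a minimal action of $G$ on $\RR$ which is semi-conjugate to the original and still has at most one fixed point (a non-trivial element fixing a complementary interval setwise would fix its two endpoints). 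We may thus assume $G$ acts minimally. If this action is free, Theorem~\ref{t.holderR} together with minimality makes it conjugate to a dense group of translations, again affine. So we may assume some $f\in G_+$ has a unique fixed point $p$, and that $Gp$ is dense in $\RR$.

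The heart of the proof now has two parts. First, one must exclude parabolic elements. If $f(x)>x$ for all $x\neq p$ (the opposite inequality being symmetric), use density of $Gp$ to pick $g\in G$ with $g(p)=q<p$, so that $\hat f:=gfg^{-1}$ is again parabolic with fixed point $q$; a ping-pong argument playing $f$ against $\hat f$ on the interval $(q,p)$ and on the two complementary rays produces a non-trivial element with two fixed points, a contradiction. Hence, replacing $f$ by $f^{-1}$ if necessary, $a:=f$ satisfies $a(x)<x$ for $x>p$ and $a(x)>x$ for $x<p$, so $a^{n}(x)\to p$ for every $x\in\RR$: the element $a$ is a global topological contraction towards $p$. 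Second, one upgrades this to an affine structure. The stabilizer $H:=\stab_G(p)$ contains $a$ and acts freely on each side of $p$, hence is abelian by Theorem~\ref{t.holderR}; using the contraction $a$ and the density of $Gp$ one builds a monotone surjection $h\colon\RR\to\RR$ with $h(p)=0$ that conjugates $a$ to $y\mapsto y/2$ and conjugates $H$ to a group of dilations $y\mapsto\lambda y$ (equivalently, in the logarithmic coordinate, $H$ acts by translations). For $g\in G$ with $g(p)\neq p$, the behaviour of $g$ near $p$ — made precise through the renormalizations $a^{n}ga^{-n}$, whose fixed points $a^{n}(g(p))$ converge to $p$ — yields a well-defined multiplier $\lambda_g>0$, and the at-most-one-fixed-point hypothesis then forces $g$ to equal $y\mapsto\lambda_g y+\beta_g$ in the coordinate $h$: any deviation, composed with suitable powers of $a$ and elements of $H$, would create a second fixed point. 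Thus $G$ becomes a subgroup of $\aff(\RR)$.

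Finally, if $G\neq G_+$ one applies the above to $G_+$ and then reinstates an orientation-reversing element $\tau$: it has a unique fixed point, conjugates the affine model of $G_+$ to its mirror image, and the combined action lands in $\{x\mapsto\pm\lambda x+\beta\}\le\aff(\RR)$ (in the global-fixed-point case one checks separately that $\tau$, or $\tau^2$, keeps track of $p$). I expect the main obstacle to be the two steps of the third paragraph: excluding parabolic elements, and then converting the purely combinatorial hypothesis ``at most one fixed point'' into genuine affine rigidity. The recurring difficulty is that homeomorphisms of $\RR$ may behave wildly near $\pm\infty$, so every estimate must be anchored at the fixed point $p$ or confined to a compact invariant configuration, and one must repeatedly check that the auxiliary products and commutators brought in along the way do not secretly acquire a forbidden second fixed point.
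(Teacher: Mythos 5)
The paper does not prove Theorem~\ref{t.solodov}: it is quoted as a classical result, with references to Solodov, Barbot, Kova\v{c}evi\'c and Ghys, so there is no internal proof to compare yours with. Your outline does follow the standard route of those references (H\"older's theorem for the global-fixed-point case, reduction to a minimal model by collapsing gaps, exclusion of ``parabolic'' elements, and construction of an affine coordinate from a topological contraction at a fixed point), and your first paragraph is a complete and correct treatment of the first alternative.

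In the remaining part there are three genuine gaps. (i) ``Every orbit is unbounded, hence a minimal invariant set exists'' is not a valid deduction for actions on the line; existence of a minimal set is exactly what Lemma~\ref{l.existencia_minimal} of the paper establishes, using the hypothesis on fixed points to produce a bounded interval met by every orbit, and you need that argument (or a citation to it) here. (ii) The exclusion of parabolic-type elements is only announced. It can be completed along the lines you indicate: with $q=g(p)<p$ and $\hat f=gfg^{-1}$ of the same type $(+,+)$, the element $\hat f^{-m}f$ satisfies $\hat f^{-m}f(q)>q$ and $\hat f^{-m}f(p)<p$ for all $m\ge 1$, and for $z$ so negative that $f(z)<q$ one gets $\hat f^{-m}f(z)<z$ once $m$ is large, producing two fixed points of a non-trivial element; but as written there is no argument, only the phrase ``a ping-pong argument \ldots produces a non-trivial element with two fixed points.'' (iii) Most seriously, the step you summarize as ``the at-most-one-fixed-point hypothesis then forces $g$ to equal $y\mapsto\lambda_g y+\beta_g$ \ldots any deviation would create a second fixed point'' is the entire content of Solodov's theorem, and it is stated rather than proved: one must show that the renormalizations $a^nga^{-n}$ actually converge so that $\lambda_g$ is well defined, that $g\mapsto\lambda_g$ is a homomorphism, and that the resulting equivariant map $h$ is monotone and proper. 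So what you have is a correct and reasonable plan, consistent with the known proofs, but the two pivotal steps you yourself flag as ``the main obstacle'' are precisely the ones left undone.
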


Proofs of Solodov's theorem  \cite[Theorem 3.21]{Solodov} were also given by Barbot \cite[Théorème 2.8]{Barbot} and Kova\v cevi\'c \cite{Ko1}, see also the discussion in Ghys \cite[Theorem 6.12]{ghys-circle} and in Farb--Franks \cite{Farb-Franks}.

The main result of this note is about the next case which is not covered by the previous theorems, that is, when $N=2$.

\begin{theoremA}\label{t.2R}Consider a subgroup $G\leq \homeo(\mathbb{R})$ with at most $2$ fixed points. Then we have two possibilities:
	\begin{itemize}
		\item $G_+$ is abelian, and every fixed point of a non-trivial element of $G_+$ is a \mbox{global fixed point of $G_+$,} or
		\item the action of $G$ is semi-conjugate to an action by affine transformations. 
	\end{itemize}
\end{theoremA}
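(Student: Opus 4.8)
The plan is to reduce to the orientation-preserving subgroup $G_+$ and then run a case analysis governed by the set $\fix(G_+)$ of global fixed points, feeding the complementary intervals into the theorems of H\"older and Solodov. A preliminary observation simplifies the first alternative: if $G_+$ is abelian, then with at most two fixed points every fixed point of a non-trivial $f\in G_+$ is automatically global, since any $g\in G_+$ commuting with $f$ and moving a fixed point $p$ of $f$ would produce the infinite set $\{g^n(p):n\in\ZZ\}$ of fixed points of $f$. Hence the first alternative is equivalent to ``$G_+$ abelian'', and it suffices to prove that a non-abelian $G_+$ is semiconjugate to an affine action; moreover the semiconjugacy we construct will be canonical, hence automatically equivariant under any orientation-reversing elements of $G$, so we may indeed work with $G_+$ alone.

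\emph{Reduction to the case $\fix(G_+)=\emptyset$.} The set $\fix(G_+)$ is closed, and $G_+$ preserves each of its complementary intervals, since it fixes their endpoints. On a bounded complementary interval an element with an interior fixed point would have at least three fixed points, so $G_+$ acts freely there and is abelian on it by Theorem~\ref{t.holderR}; on an unbounded complementary interval the same count leaves at most one fixed point, so by Theorem~\ref{t.solodov} the restriction of $G_+$ there is either abelian or semiconjugate to an affine action. If $G_+$ were abelian on every complementary interval it would be abelian on $\RR$ (commutators vanish on each interval and on $\fix(G_+)$, hence everywhere). So, assuming $G_+$ non-abelian, some complementary half-line $I$ carries a non-abelian action, which by Theorem~\ref{t.solodov} is semiconjugate to a non-abelian affine action, and such an action has no global fixed point. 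One then shows $\fix(G_+)=\emptyset$: the $G_+$-action on $I$ is faithful (an element trivial on $I$ fixes the whole closed half-line $\overline I$, hence is trivial), and pulling back from the affine model a translation-like element and a hyperbolic-like element and examining the dynamics they are forced to exhibit on the other side of the endpoint of $I$, where they have at most one further fixed point, yields a contradiction. This step already uses the two-fixed-point hypothesis in an essential way.

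\emph{The core case: $\fix(G_+)=\emptyset$ and $G_+$ non-abelian.} Here we follow the skeleton of the known proofs of Solodov's theorem. First produce a fixed-point-free, ``translation-like'' element $t$; if no non-trivial element is fixed-point-free, a separate combinatorial analysis of the family $\{\fix(g):g\in G_+\setminus\{\idid\}\}$ of one- and two-point sets yields either a common fixed point or abelianness, contradicting our assumptions. After semiconjugating $t$ to $x\mapsto x+1$, analyse the elements with one or two fixed points and show that, after passing to suitable powers and inverses and composing with $t$, one obtains genuinely contracting elements whose iterates collapse arbitrarily large compact sets into arbitrarily small intervals; the delicate point is that a second fixed point could a priori trap an interval, which is excluded using $\fix(G_+)=\emptyset$. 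From this the $G_+$-action is seen to be minimal and proximal, and Solodov's endgame then produces a monotone surjection $\varphi\colon\RR\to\RR$ intertwining $G_+$ with a subgroup of $\aff(\RR)$; being intrinsic, $\varphi$ also conjugates the orientation-reversing elements of $G$ to affine maps, completing the proof.

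The main obstacle throughout is the element with exactly two fixed points, which has no analogue in Solodov's setting. Its local dynamics can take several a priori incompatible ``shapes'' (both fixed points one-sidedly attracting, a semistable fixed point, and so on), and one must show that inside a group with at most two fixed points these shapes are severely constrained --- in effect, that such an element behaves, for the purposes of ping-pong, like a product of two one-fixed-point pieces --- so that the Solodov machinery still closes up. Making this precise, and in particular ruling out the ``trapping'' configurations, is the crux of both the reduction step and the core case.
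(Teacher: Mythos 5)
Your overall architecture is reasonable and its first half essentially coincides with the paper's: the observation that an abelian $G_+$ with at most two fixed points has only global fixed points is Corollary \ref{c.abelian_global_fixed}, and your reduction via the complementary intervals of $\fix(G_+)$ amounts to Lemma \ref{l.2_global_fixed_points} (there, if a global fixed point exists, both complementary half-lines carry faithful actions with at most one fixed point; if these are non-abelian, Solodov's theorem makes any element that is non-trivial in the abelianization act as a homothety on each side, hence with three fixed points). Your version of this step is stated only as ``examining the dynamics \dots yields a contradiction,'' but the idea is recoverable and correct.

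The genuine gap is in the core case. You propose to rerun the proof of Solodov's theorem (translation-like element, contraction, minimality, proximality, endgame) while ``severely constraining'' the possible shapes of an element with exactly two fixed points, and you yourself identify making this precise as the crux --- but you never supply that argument. Nothing in the proposal actually rules out any configuration of a two-fixed-point element, nor explains how the ping-pong closes up in its presence; this is precisely where all the work of the paper lies. The paper's Lemma \ref{l.2_fixed_non_minimal} proves that if $\fix(g)=\{x,y\}$ then the $G$-orbit of $x$ never enters $(x,y)$, via a case analysis on the type $(\varepsilon_0,\varepsilon_1,\varepsilon_2)$ of $g$, in each case exhibiting by hand an element with three fixed points; the hardest type $(+,-,+)$ requires a delicate construction using auxiliary conjugates $h^{n}f_vh^{-n}$ and $g^{-m}f_ug^{m}$ built from elements pushing $\{x,y\}$ far to either side. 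None of this is foreseeable from, or substituted by, your outline. Note also that the paper's route is more economical than the one you sketch: once the intervals $(x,y)$ are known to be wandering, one collapses them and invokes Solodov's theorem as a black box on the quotient action (handling separately the case of a discrete minimal invariant set via Remark \ref{r.closed_orbit}), rather than reproving minimality, proximality and the affine endgame in the presence of two-fixed-point elements. I would recommend abandoning the plan of redoing Solodov's machinery and instead proving the wandering-interval statement directly.
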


In other words, there is no new interesting group with at most two fixed points.
In fact, the group of affine transformations $\aff(\RR)$ is the only known example of group with at most $N$ fixed points, for any $N\ge 1$, which acts minimally on the real line. This suggests the following conjecture.

\begin{conjecture}\label{c.NR}
	Let $N\in \NN$ and let $G\leq \homeo(\mathbb{R})$ be  a subgroup  with at most $N$ fixed points. Let $I\subset \RR$ be a maximal interval without global fixed points for $G_+$.  Then the restriction of the action of $G$ to $I$ is semi-conjugate to the action by affine transformations.
\end{conjecture}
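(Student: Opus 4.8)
The plan is to prove the conjecture by induction on $N$, reducing everything to the identification of a canonical normal \emph{translation subgroup}, in the spirit of the proofs of the theorems of H\"older and Solodov. Throughout I identify the maximal interval $I$ with $\RR$, and I use repeatedly that semi-conjugacy is transitive, so that a semi-conjugacy built on an auxiliary model composes with the identification of $I$ to give the desired one.

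First I would perform the standard minimalization. Let $\Lambda\subseteq I$ be the minimal closed $G_+$-invariant set. If the orbits are discrete, then collapsing gaps produces a free action and H\"older's Theorem~\ref{t.holderR} already yields the conclusion; if $\Lambda$ is a Cantor set, collapsing its complementary gaps yields a semi-conjugate minimal action; and if $\Lambda=I$ the action is already minimal. In every case it suffices to treat a \emph{minimal} action of $G_+$ on $\RR$ and to show that such an action is conjugate to a subgroup of $\aff_+(\RR)$. In this minimal setting each non-trivial $g\in G_+$ is of exactly one type: it moves every point to the right, it moves every point to the left, or it has a non-empty fixed-point set $\fix(g)$ with $1\le|\fix(g)|\le N$. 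I then set
\[
T=\{1\}\cup\{g\in G_+:\fix(g)=\emptyset\}.
\]

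The heart of the argument is to show that $T$ is a \emph{subgroup}; it is automatically normal, since any conjugate of a fixed-point-free map is fixed-point-free. Granting this, H\"older's theorem makes $T$ abelian and semi-conjugate to a group of translations, and minimality of the ambient $G_+$-action forces the normal subgroup $T$ to act minimally as well, so that in a suitable coordinate $T$ becomes a dense subgroup of the genuine translation group. Finally, each element of $G_+$ normalizes $T$ and therefore conjugates translations to translations, multiplying their lengths by a fixed factor $\lambda(g)>0$; a homeomorphism of $\RR$ with this property is necessarily affine, and $g\mapsto\lambda(g)$ is precisely the multiplier homomorphism $G_+\to\RR_{>0}$ whose kernel is $T$. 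This realizes $G_+$ inside $\aff_+(\RR)$, while the orientation-reversing coset of $G$ contributes only maps $x\mapsto-\lambda x+c$, completing the identification with an affine action.

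The main obstacle — and the reason the statement is only a conjecture — is exactly the closure of $T$ under multiplication: a product $fg$ of a right-mover and a left-mover can a priori acquire fixed points, and one must show that the bound of $N$ fixed points forbids this in a minimal action. Here I would invoke the inductive hypothesis. Given $g\neq1$ with $\fix(g)\neq\emptyset$, its complementary intervals are permuted by the centralizer of $g$, and on the invariant ones the relevant stabilizer acts with strictly fewer than $N$ fixed points, so the inductive hypothesis endows each such piece with its own affine structure. The remaining task is to show that these local affine structures are mutually compatible and assemble into a single global affine coordinate, equivalently that a hypothetical fixed point of $fg$ forces, after suitable iteration and commutation, an element with more than $N$ fixed points. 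For $N\le2$ the combinatorics of the fixed-point configurations is rigid enough to force such a contradiction directly, as in the present paper; the genuine difficulty for general $N$ is to control how the fixed-point sets of distinct elements interleave, and I expect that closing the induction will require a new counting or ping-pong argument bounding the total number of fixed points produced by products.
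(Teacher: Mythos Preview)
The statement is labeled a \emph{conjecture} in the paper and no proof is given; the only supporting evidence is the remark that it holds in regularity $C^r$, $r>1$, by combining results of Akhmedov and Plante. There is therefore no proof in the paper to compare your attempt against, and your proposal is, as you yourself concede, a strategy rather than a proof: you explicitly identify closure of $T$ under multiplication as ``the reason the statement is only a conjecture'' and leave that step open.

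Beyond that acknowledged gap, two further issues deserve mention. First, you never argue that $T$ is non-trivial; nothing you have written excludes a minimal action in which every non-identity element has at least one fixed point, and if $T=\{1\}$ the entire scheme collapses before the question of closure under products even arises. Second, the inductive step is not well-formulated: the conjecture concerns the action of $G$ on a maximal interval without global fixed points for $G_+$, whereas you propose to apply the inductive hypothesis to the action of the \emph{centralizer} of some $g$ on a component of $\RR\setminus\fix(g)$. That is a smaller group acting on a different kind of interval, and you offer no mechanism by which affine coordinates built for various centralizers on various pieces would be mutually compatible or would assemble into a single coordinate equivariant for all of $G$. The H\"older--Solodov template you outline is the natural first attempt, but as it stands it does not close, and the paper does not claim that it does.
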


\begin{remark}
	The above conjecture holds true under higher regularity assumptions. Indeed, a result by Akhmedov \cite{akhmedov1,MR3519414} gives that any subgroup of $\diffeo_+^{r}([0,1])$, with $r>1$, with at most $N$ fixed points is solvable. 	
	On the other hand, a classical result of Plante \cite{Plante} gives that if a solvable group acts on the line such that the set of fixed point of any element acting non-trivial is discrete, then the action is semi-conjugate to an affine action. Using results from Bonatti, Monteverde, Navas and Rivas \cite{BMNR}, it is not difficult to conclude that if the affine action is non-abelian, then the semi-conjugacy is actually a conjugacy.
\end{remark}

\begin{small}
\subsection*{Acknowledgments.}
	We are grateful to Christian Bonatti and Michele Triestino for the very useful
	conversations and to Lorenzo D\'iaz, Andr\'es Navas and Maxime Wolff for their valuable remarks on the manuscript. This work was originally the content of the author's master thesis at PUC-Rio, and then appeared in the appendix of the author's PhD thesis.
	
	The author has been partially supported by the project MATH AMSUD DGT -- Dynamical Group Theory (22-MATH-03), the  project ANR Gromeov (ANR-19-CE40-0007). Our host department IMB receives support from the EIPHI Graduate School (ANR-17-EURE-0002).
\end{small}

\section{Preliminaries}

Let us first introduce the classical notion of (semi-)conjugacy for group actions on the line.
We say that a function $h:\RR\to\RR$ is a \emph{monotone proper map} if $h$ is non-decreasing and $\lim_{x\to \pm\infty}h(x)=\pm \infty$.
Let $G$ and $F$ be two subgroups of $\homeo(\RR)$. We say that the action of $G$ is \emph{(positively) semi-conjugate} to $F$ if there exist a surjective morphism $\theta: G\rightarrow F$, and a continuous, monotone proper map $h:\RR\rightarrow\RR$, which is equivariant, in other words 
$$\theta(g)h=hg,\quad \text{ for every } g\in G.$$
The map $h$ is called a \emph{(positive) semi-conjugacy}, and when $h$ is a homeomorphism, we say that it is a \emph{(positive) conjugacy}, in which case we also say that $G$ and $F$ are \emph{(positively) conjugate}.

There exists a natural equivalence relation for subgroups of $\homeo(\RR)$ given by $$G\sim F\quad \text{ if, and only if, } \quad G \text{ and } F \text{ are conjugate}.$$
However, semi-conjugacy does not define directly an equivalence relation (but one can slightly modify the definition so that it actually defines an equivalence relation, see for instance \cite{Kim-Koberda-Mj}).

The notion of conjugacy and semi-conjugacy can be transferred to homeomorphisms by stating that $f$ and $g$ in $\homeo(\RR)$ are (semi-)conjugate by the (semi-)conjugacy $h$ if the subgroups $\langle f\rangle$ and $\langle g \rangle$ are (semi-)conjugate by the (semi-)conjugacy $h$.
In this case, the surjective morphism $\theta:\langle f\rangle\rightarrow \langle g \rangle$ is the identity or the inversion, which implies that $fh=hg$ or $f^{-1}h=hg$. If we are in the second case, we replace our notation by saying that $f^{-1}$ and $g$ are (semi-)conjugate by the (semi-)conjugacy $h$.

Given a group action on a space $X$, we define the \emph{minimal invariant subset} as the smallest (by inclusion) closed non-empty subset of $X$ such that the orbit of every point is contained in the subset. Such subsets always exist for actions on compact topological spaces. This is however not the case for general actions on non-compact spaces. We have however the following result.

\begin{lemma}\label{l.existencia_minimal}
	Let $G$ be a subgroup of $\homeo(\RR)$ with at most $N$ fixed points, then $\RR$ contains a minimal invariant subset for the action of $G$.
	
	Moreover, the closure of any $G$-invariant subset contains a minimal invariant subset.
\end{lemma}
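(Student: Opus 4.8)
The plan is to prove the second, stronger, assertion; the first is then the special case of the $G$-invariant set $\RR$. Let $Y\subseteq\RR$ be a non-empty $G$-invariant set and put $K:=\overline{Y}$, a non-empty closed $G$-invariant set. If $G_+$ is trivial then $|G|\le 2$, and the orbit of any point of $Y$ (a single point, or a two-point set) is a minimal invariant set contained in $\overline{Y}$, so we are done; from now on I fix a non-trivial $g\in G_+$. I would then apply Zorn's lemma to the family $\mathcal{F}$ of non-empty closed $G$-invariant subsets of $K$, ordered by reverse inclusion: a maximal element of $\mathcal{F}$ is exactly a minimal invariant set contained in $K$, hence in $\RR$. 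Since $K\in\mathcal{F}$, it is enough to prove that every chain $\{K_\alpha\}$ in $\mathcal{F}$ has non-empty intersection, for then that intersection again lies in $\mathcal{F}$ and bounds the chain. The only way this can fail is if the sets $K_\alpha$ escape to infinity, and ruling that out is where the fixed-point hypothesis must be used.

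Suppose, for contradiction, that some chain has $\bigcap_\alpha K_\alpha=\emptyset$. For each $m\in\NN$ the nested compact sets $K_\alpha\cap[-m,m]$ cannot all be non-empty, so some member of the chain misses $[-m,m]$; letting $K_m$ be the smallest of the members chosen for the indices $1,\dots,m$, I obtain a decreasing sequence $K_1\supseteq K_2\supseteq\cdots$ in $\mathcal{F}$ with $K_m\cap[-m,m]=\emptyset$ for all $m$ and $\bigcap_m K_m=\emptyset$. Write $K_m=L_m\sqcup R_m$ with $L_m=K_m\cap(-\infty,-m)$ and $R_m=K_m\cap(m,\infty)$, both closed since $K_m$ misses $[-m,m]$; for each $m$ at least one of $L_m,R_m$ is non-empty, and since $L_\bullet,R_\bullet$ are decreasing, one of them is non-empty for every $m$. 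By the symmetry $x\mapsto-x$ I may assume $R_m\neq\emptyset$ for all $m$; then $c_m:=\inf R_m$ is attained and $c_m\ge m$.

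The key claim is that $g(R_m)=R_m$ once $m$ is large. Since $g$ is an increasing homeomorphism of $\RR$ we have $g(m),g^{-1}(m)\to+\infty$, so $g(m)>0$ for $m$ large, and then \[ g(R_m)=g(K_m)\cap g\bigl((m,\infty)\bigr)=K_m\cap(g(m),\infty)\subseteq K_m\cap(-m,\infty)=R_m, \] using $g(K_m)=K_m$ and that $K_m$ avoids $(-m,m)$; applying the same computation to $g^{-1}$ gives $g^{-1}(R_m)\subseteq R_m$, i.e.\ $R_m\subseteq g(R_m)$. Hence $g$ fixes $c_m=\inf R_m$ for all large $m$, so $\fix(g)$ contains the infinite set $\{c_m\}$ (infinite because $c_m\ge m\to\infty$), contradicting that the non-trivial element $g$ has at most $N$ fixed points. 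Thus every chain in $\mathcal{F}$ has non-empty intersection, Zorn's lemma applies, and $\overline{Y}$ contains a minimal invariant set. I expect this last paragraph — turning ``the $K_\alpha$ escape to infinity'' into a non-trivial element with infinitely many fixed points — to be the only real obstacle; the reduction to a nested sequence and the Zorn bookkeeping are routine.
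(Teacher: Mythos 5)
Your proof is correct, but it takes a genuinely different route from the paper's. The paper first disposes of global finite orbits, then uses the hypothesis to pick $N+1$ elements $g_1,\dots,g_{N+1}\in G_+$ with no common fixed point and forms $F(x)=\max_n g_n^{\pm1}(x)>x$; this forces every orbit to meet the fixed bounded interval $[0,F(0))$, at which point the existence of a minimal set in every orbit closure is delegated to the cited Lemma \ref{l.existencia_minimal_intervalo} (itself a compactness/Zorn argument). You instead run Zorn's lemma directly on the closed invariant subsets of $\overline{Y}$ and attack the only failure mode — a chain whose members escape to infinity — head-on: reducing to a nested sequence $K_m$ avoiding $[-m,m]$, splitting off the right-hand pieces $R_m$, and showing a single non-trivial $g\in G_+$ must fix the unbounded sequence $\inf R_m$, contradicting finiteness of $\fix(g)$. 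All the steps check out (the reduction to a countable nested subsequence via compactness of $K_\alpha\cap[-m,m]$, the computation $g(R_m)=K_m\cap(g(m),\infty)\subseteq R_m$ using that $K_m$ avoids $[-m,m]$, and the degenerate case $G_+=\{\idid\}$, which the paper's proof silently skips). The trade-off: the paper's argument is shorter on the page because the Zorn step is outsourced, and its intermediate object ($F$ with $F(x)>x$, hence a compact ``fundamental domain'' met by every orbit) is reused conceptually elsewhere; your argument is self-contained and actually uses a weaker hypothesis — it only needs one non-trivial orientation-preserving element whose fixed-point set does not accumulate at $\pm\infty$, rather than the full ``at most $N$ fixed points'' assumption needed to build $F$.
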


The proof of this result will be a direct application of the following stronger lemma (see for instance \cite[Proposition 2.1.12]{navas-book}).

\begin{lemma}\label{l.existencia_minimal_intervalo}
	Let $G$ be a subgroup of $\homeo(\RR)$ and $I$ a bounded open interval of $\RR$ such that the orbit of any point $x\in \RR$ intersects $I$, then the closure of any orbit $\overline{G.x}$ contains a minimal invariant subset for the action of $G$.
\end{lemma}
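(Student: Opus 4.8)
The plan is to apply Zorn's lemma to the collection $\mathcal{C}$ of all nonempty closed $G$-invariant subsets of $\overline{G.x}$, partially ordered by reverse inclusion, and to extract a minimal element. First I would note that $\mathcal{C}$ is nonempty: the set $\overline{G.x}$ is closed and nonempty, and it is $G$-invariant because the image of an invariant set under each homeomorphism in $G$ is invariant, and taking closures preserves invariance.

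The key step is to verify the chain condition, and this is where the hypotheses are used. Let $\overline{I}$ denote the closure of $I$, which is a compact interval since $I$ is bounded. I claim that every $C\in\mathcal{C}$ meets $\overline{I}$: picking any $y\in C$, the orbit $G.y$ is contained in $C$ by invariance, and by hypothesis $G.y$ intersects $I\subseteq\overline{I}$, so $C\cap\overline{I}\neq\emptyset$. Now let $\{C_\alpha\}_\alpha$ be a chain in $\mathcal{C}$, totally ordered by inclusion. The sets $C_\alpha\cap\overline{I}$ form a family of nonempty closed subsets of the compact set $\overline{I}$ which is totally ordered by inclusion, hence enjoys the finite intersection property; by compactness $\bigcap_\alpha\left(C_\alpha\cap\overline{I}\right)\neq\emptyset$, and therefore $C:=\bigcap_\alpha C_\alpha$ is nonempty. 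Being an intersection of closed $G$-invariant sets, $C$ is itself closed and $G$-invariant, so $C\in\mathcal{C}$ and $C$ is an upper bound for the chain in the reverse-inclusion order.

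By Zorn's lemma, $\mathcal{C}$ has a maximal element for reverse inclusion, that is, a set $M\subseteq\overline{G.x}$ which is nonempty, closed, $G$-invariant, and contains no proper nonempty closed $G$-invariant subset. Finally I would observe that $M$ is a minimal invariant subset for the action of $G$ on $\RR$ and not merely within $\overline{G.x}$: any nonempty closed $G$-invariant subset of $M$ is automatically contained in $\overline{G.x}$, hence lies in $\mathcal{C}$, and the minimality of $M$ in $\mathcal{C}$ forces it to equal $M$. This completes the proof. The only genuinely substantive point is the chain condition in the second paragraph, where the boundedness of $I$ and the fact that every orbit meets $I$ enter through the compactness of $\overline{I}$ and the finite intersection property; the remainder of the argument is formal.
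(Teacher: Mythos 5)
Your proof is correct. The paper gives no argument of its own for this lemma (it simply refers to Proposition 2.1.12 of Navas's book), and your Zorn's lemma argument --- with the compactness of $\overline{I}$ and the finite intersection property securing the chain condition --- is exactly the standard proof found there.
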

%
%

\begin{proof}[Proof of Lemma \ref{l.existencia_minimal}]
	We will assume that there are no global finite orbits for the action of $G$, otherwise every orbit would either be a minimal invariant subset or contains one in its closure. Therefore, there are no global fixed points to the action of $G_+$ and,	
	since every element has at most $N$ fixed points, we can choose $N+1$ elements $\{g_1,\ldots,g_{_{N+1}}\}\in G_+$ such that no point is fixed by all $g_n$ and we define the function $F:\RR\rightarrow \RR$ as:
	$$F(x):=\max_{n\in\{1,\ldots,N+1\}}{g_n^{\pm 1}(x)}.$$
	One can observe that $F$ is increasing, continuous and $F(x)>x$, for all $x\in\RR$. Now, let $I$ be any bounded open interval containing $[0, F(0)]$ and we claim that every orbit intersects $I$. Indeed, for all $x\in\RR$ there exists $k\in\ZZ$ such that $F^k(x)< 0$ and $F^{k+1}(x)\geq 0$, so we have $$F^{k+1}(x)=F(F^k(x))\in [0,F(0))\subset I.$$
	Therefore, the orbit of every point intersects the interval $I$ and, by Lemma \ref{l.existencia_minimal_intervalo}, we conclude that the closure of the orbit of any point contains a minimal invariant subset for the action of $G$.
\end{proof}

\begin{lemma}
	Let $G$ be an abelian subgroup of $\homeo(\RR)$ with at most $N$ fixed points. If there exists an element $f\in G$ with $\fix(f)\neq\emptyset$, then the minimal invariant subset of the action of $G$ is finite. Moreover, there exists a minimal invariant subset contained in $\fix(f)$, for every $f\in G$.
\end{lemma}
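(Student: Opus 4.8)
The engine of the whole argument is the elementary observation that, because $G$ is abelian, the fixed-point set $\fix(h)$ of every $h\in G$ is $G$-invariant: if $g\in G$ and $h(x)=x$, then $h(g(x))=g(h(x))=g(x)$, so $g(\fix(h))=\fix(h)$. We take $f$ non-trivial (otherwise the hypothesis is vacuous); then $\fix(f)$ is a closed set with at most $N$ points, hence a non-empty \emph{finite} $G$-invariant subset of $\RR$. I would then record the general fact that a non-empty finite $G$-invariant set $\Lambda\subseteq\RR$ always contains a minimal invariant subset: a $G$-invariant non-empty subset of $\Lambda$ of least cardinality is finite, non-empty, $G$-invariant, and minimal, since any non-empty closed $G$-invariant subset of it would again be a $G$-invariant subset of $\Lambda$ of strictly smaller cardinality. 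Applying this with $\Lambda=\fix(f)$ already yields a finite minimal invariant subset contained in $\fix(f)$, which proves the first assertion and the ``moreover'' part for this particular $f$.

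For the ``moreover'' part with an arbitrary $g\in G$ in place of $f$, I would split into three cases. If $g=\idid$, then $\fix(g)=\RR$ contains every minimal invariant subset. If $g$ is orientation-reversing, then $g$ has exactly one fixed point (the continuous strictly decreasing function $x\mapsto g(x)-x$ runs from $+\infty$ to $-\infty$), and this point is $G$-invariant by the displayed identity, hence is a minimal invariant subset contained in $\fix(g)$; note this case does not even use the hypothesis. If $g\in G_+\setminus\{\idid\}$, it is enough to show $\fix(g)\neq\emptyset$, since then $\fix(g)$ is a non-empty finite $G$-invariant set and the fact above applies. For this I use the hypothesis: if $f\in G_+$, then $G_+$ acts on the finite set $\fix(f)$ by increasing bijections, and an increasing self-bijection of a finite subset of $\RR$ is the identity, so $G_+$ fixes $\fix(f)$ pointwise and hence $\emptyset\neq\fix(f)\subseteq\bigcap_{h\in G_+}\fix(h)\subseteq\fix(g)$; if instead $f$ is orientation-reversing, then by the previous case its unique fixed point is globally fixed by $G$, so again $\fix(g)\neq\emptyset$.

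To make the first assertion say that \emph{every} minimal invariant subset is finite, I would argue as follows. Let $M$ be a minimal invariant subset and suppose it is infinite; then $G$ is infinite, since a finite group has only finite orbits and hence only finite minimal invariant subsets. For every non-trivial $h\in G$ the set $M\cap\fix(h)$ is closed and $G$-invariant (using again that $\fix(h)$ is $G$-invariant), so by minimality it equals $\emptyset$ or $M$; it cannot equal $M$, because $M$ is infinite and $\fix(h)$ is finite; hence $G$ acts freely on $M$, and in particular $M\cap\fix(f)=\emptyset$. Therefore $M$ lies in the union of the finitely many complementary intervals of the finite set $\fix(f)$, and some such interval $I$ meets $M$ in an infinite set. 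Since every complementary interval of a non-empty finite set has a finite endpoint, fix such an endpoint $p$ of $I$; as $M$ is closed and $p\notin M$, the set $M\cap I$ does not accumulate at $p$, so its extremum $m_0$ on the side of $p$ is attained and lies in $I$. Finally, the action of $G$ on the finite set $\fix(f)$ has a kernel of finite index, which is therefore infinite, as is its intersection with $G_+$; any non-trivial $g$ in that intersection is an increasing homeomorphism fixing $\fix(f)$ pointwise, hence preserving $I$ and so $M\cap I$, which forces $g(m_0)=m_0$ (both $g$ and $g^{-1}$ send $m_0$ into $M\cap I$ and $g$ is increasing). This contradicts freeness of the action on $M$, so $M$ is finite.

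The conceptual content is light: the single remark that fixed sets of elements of an abelian group are invariant instantly produces a non-empty finite invariant set and hence a finite minimal invariant subset. The main obstacle is bookkeeping -- keeping track of orientation-reversing elements (whose unique fixed point is automatically a global fixed point), and, in the last step, making sure that the extremal point of $M$ inside a complementary interval of $\fix(f)$ is genuinely attained and is fixed by the infinitely many elements fixing $\fix(f)$ pointwise. I expect this last point to be where the argument needs the most care.
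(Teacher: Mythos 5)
Your proof is correct, and its engine is exactly the paper's: since $G$ is abelian, $\fix(f)$ is a $G$-invariant, closed, finite set, hence contains a (necessarily finite) minimal invariant subset. The difference is in how much you choose to prove and how self-contained you make it. The paper's proof is three lines: it obtains the minimal set inside $\fix(f)$ by citing its Lemma~\ref{l.existencia_minimal} (closures of invariant sets contain minimal sets), and it leaves implicit both the assertion that \emph{every} minimal invariant subset is finite (which in the paper's logic follows from the mutual exclusivity of the cases in Theorem~\ref{t.minimal_R}) and the fact that every element of $G$ actually has a nonempty fixed-point set, which is what the literal ``for every $f\in G$'' clause requires. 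You replace the citation by a direct least-cardinality argument on invariant subsets of $\fix(f)$, you handle orientation-reversing elements and the trivial element separately (correctly noting that a decreasing homeomorphism has a unique, hence globally fixed, fixed point), and you give a genuine standalone proof that no minimal invariant subset can be infinite, via freeness of the action on such an $M$ and the attained extremum of $M$ in a complementary interval of $\fix(f)$ being fixed by the infinite kernel of $G\to\mathrm{Sym}(\fix(f))$. That last argument is not in the paper at all and is the most delicate part of your write-up; it checks out. You are also right that the statement needs $f\neq\idid$ to be meaningful (otherwise $\fix(f)=\RR$ and the conclusion can fail, e.g.\ for an infinite cyclic group of translations), a point the paper glosses over. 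In short: same key idea, but your version is more elementary in its dependencies and proves strictly more, at the cost of length.
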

\begin{proof}
	Let $x\in X$ be a fixed point of $f\in G$, then for every $g\in G$ we have
	$$fg(x)=gf(x)=g(x).$$
	Therefore, $G\cdot x\subset\fix(f)$ for every $x\in\fix(f)$, which follows that $\fix(f)$ is a closed and invariant subset of $X$. Now, by Lemmas \ref{l.existencia_minimal} and \ref{l.existencia_minimal_intervalo}, the closure of the orbit $\overline{G.x}$ contains a minimal invariant subset for the action of $G$ and since it is contained in $\fix(f)$, we conclude that there exists a minimal invariant subset contained in $\fix(f)$.
\end{proof}
\begin{corollary}\label{c.abelian_global_fixed}
	Let $G$ be an abelian subgroup of $\homeo(\RR)$ with at most $N$ fixed points, then every fixed point is globally fixed.
\end{corollary}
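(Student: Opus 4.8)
The plan is to combine the orbit computation that underlies the preceding lemma with the elementary fact that an order-preserving self-bijection of a finite subset of $\RR$ is the identity. Let $x\in\RR$ be a point fixed by some non-trivial $g\in G$; the goal is to show that $x$ is fixed by every element of $G_+$, i.e.\ that $x$ is a global fixed point of the $G_+$-action (and, when convenient, fixed by all of $G$).

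First I would note that, by commutativity, $g(h(x))=h(g(x))=h(x)$ for every $h\in G$, so the whole orbit $G\cdot x$ is contained in $\fix(g)$. Since $g$ is non-trivial and $G$ has at most $N$ fixed points, $\fix(g)$ is a finite subset of $\RR$, hence so is $G\cdot x$; write $G\cdot x=\{x_1<\dots<x_k\}$. Every $h\in G_+$ permutes this finite set and, being orientation-preserving on $\RR$, preserves its order; an order-preserving bijection of a finite totally ordered set is the identity, so $h$ fixes each $x_i$, and in particular $h(x)=x$. This gives the claim. If in addition one wants $x$ to be fixed by all of $G$ in the case where the witnessing element $g$ is orientation-reversing, it suffices to observe that then $\fix(g)$ is a single point, so already $G\cdot x=\{x\}$.

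I do not expect a genuine obstacle: the only point needing a little care is the possible presence of orientation-reversing elements in $G$, which is exactly why the argument is routed through the order-preserving action of $G_+$ on the finite invariant set $G\cdot x$ rather than through a blanket assertion about $G$. Alternatively one could invoke the preceding lemma directly—$\overline{G\cdot x}\subseteq\fix(g)$ is finite, hence coincides with a minimal invariant set, on which $G_+$ must act trivially—but the self-contained orbit computation above is the shortest route.
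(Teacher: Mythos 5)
Your argument is correct, and its first step --- $G\cdot x\subseteq\fix(g)$ by commutativity, hence the orbit is finite --- is exactly the computation that opens the paper's proof of the lemma preceding this corollary. Where you diverge is in how finiteness is exploited: the paper passes through minimal invariant subsets (the lemma produces a finite minimal invariant set inside $\fix(g)$, and the corollary is deduced from the assertion that finite minimal sets of abelian groups are globally fixed singletons), whereas you finish with the elementary observation that an order-preserving bijection of a finite subset of $\RR$ is the identity. Your route is shorter and self-contained; the paper's keeps the statement aligned with its surrounding discussion of minimal sets, but leaves implicit the final step that $x$, lying in the orbit of the globally fixed point, must coincide with it. You are also right to be careful about orientation-reversing elements: your conclusion is that $x$ is fixed by $G_+$ (and by all of $G$ when the witnessing $g$ reverses orientation), and this is the correct reading of ``globally fixed'' here, since Theorem~A is phrased in terms of global fixed points of $G_+$. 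Indeed the literal statement ``fixed by all of $G$'' can fail once $N\ge 3$: the abelian group generated by $g(t)=t^3$ and $r(t)=-t$ has at most $3$ fixed points, yet $1\in\fix(g)$ is not fixed by $r$. So your proof is valid and, as a bonus, pins down a point that the paper's one-line justification glosses over.
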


This corollary comes from the fact that the only finite minimal invariant subsets on $\RR$ for an abelian group are single points which are globally fixed. The complete classification for the minimal invariant subsets of group actions on $\RR$ is given in the following classical theorem (the proof in \cite{ghys-circle} for the case of actions on the circle can be easily adapted to the case of actions on the line).

\begin{theorem}\label{t.minimal_R}
	Let $G$ be a subgroup of $\homeo(\RR)$ with a minimal invariant subset $M$. Then, the closure of any orbit contains a minimal invariant subset for the action of $G$ and moreover there are four mutually exclusive possibilities.
	\begin{itemize}
		\item[1.] All the orbits are dense and $M=\RR$ is the only minimal invariant subset.
		\item[2.] All minimal invariant subsets are global finite orbits.
		\item[3.] All minimal invariant subsets are  closed orbits of $G$, which are discrete and unbounded. Moreover, there exists an element $g\in G$ without fixed points such that all minimal invariant subsets are orbits of $g$.
		\item[4.] There exists a unique minimal invariant subset $M$ which is an unbounded Cantor set, i.e.\ perfect, totally disconnected and unbounded.
	\end{itemize}
\end{theorem}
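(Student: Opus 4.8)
The plan is to follow the classical proof of the analogous trichotomy for group actions on the circle (Ghys, \cite{ghys-circle}) and to keep careful track of the two extra phenomena forced by the two ends of $\RR$. The guiding principle is minimality: every nonempty closed $G$-invariant subset of $M$ equals $M$, and every nonempty closed $G$-invariant subset of $\RR$ contains a minimal set once the latter are known to exist. First, the easy reductions. If $M=\RR$, then $\RR$ has no proper nonempty closed invariant subset, so it is the unique minimal set, and for every $x$ the closure $\overline{G\cdot x}$, being nonempty, closed and $G$-invariant, equals $\RR$; this is case~1. Assume now $M\subsetneq\RR$ and consider the derived set $M'$ of accumulation points of $M$: it is closed, contained in $M$, and $G$-invariant since homeomorphisms preserve accumulation points, so by minimality $M'=\emptyset$ or $M'=M$. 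If $M'=M$, then $M$ is perfect; since $\RR$ is connected and both $\overline{\interior M}$ and $M\setminus\interior M$ are closed and $G$-invariant, minimality forces $\interior M=\emptyset$, so $M$ is totally disconnected, a Cantor set, and unbounded (otherwise $\{\inf M,\sup M\}$ would be a finite nonempty $G$-invariant subset of an infinite set); this is case~4. If $M'=\emptyset$, then $M$ is closed and discrete, hence either finite — and then a single $G$-orbit, the orbit of any of its points being closed, invariant and contained in $M$ — which is case~2, or countably infinite, in which case the same $\inf/\sup$ argument shows $M$ is unbounded, in fact in both directions (a finite $\inf$ or $\sup$ of $M$ would be a global fixed point of $G_+$, hence a finite $G$-invariant subset of $M$), so $M$ is order-isomorphic to $\ZZ$; this is case~3.

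The ``moreover'' assertions and the mutual exclusivity rest on one observation: an infinite minimal set $N$ is unbounded in both directions, so $\RR\setminus N$ consists only of bounded gaps, whose endpoints lie in $N$, and the set of all left endpoints of gaps of $N$ is permuted by $G$, hence, if nonempty, dense in $N$. From this: (a) if one minimal set is finite there is no infinite minimal set — such an $N$ would have only finitely many gaps carrying the finite set, $G$ would permute those finitely many gaps, and their endpoints would form a finite $G$-invariant subset of $N$ — so all minimal sets are global finite orbits, which is case~2; (b) if $M$ is the Cantor minimal set, a distinct minimal set $N$ satisfies $M\cap N=\emptyset$, so $N$ lies in the union of the bounded gaps of $M$, each $N\cap I$ is compact (bounded, and closed in $\RR$ since a boundary point of $I$ would lie in $M\cap N$), and tracking the endpoints of the $G$-invariant family of gaps carrying $N$ produces a closed, discrete, $G$-invariant subset of $M$, contradicting that $M$ is perfect — so case~4 is incompatible with cases~2 and~3 and $M$ is unique; (c) in case~3 one extracts the fixed-point-free element by a Hölder-type argument on $M\cong(\ZZ,<)$: $G_+$ acts on $M$ by order-preserving bijections, i.e.\ through a homomorphism $G_+\to\ZZ$ with nontrivial image (a trivial image would fix $M$ pointwise, a finite $G$-invariant subset), and an element $g\in G$ realizing a generator of this image has no fixed point in $M$, hence none in $\RR$ since it shifts the gaps of $M$ monotonically, and every minimal set is an orbit of $g$.

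The step I expect to be the real obstacle — and the only place where the non-compactness of $\RR$ genuinely bites — is showing that $\overline{G\cdot x}$ contains a minimal set for every $x$, since one cannot apply Zorn's lemma to the possibly non-compact set $\overline{G\cdot x}$. In cases~1 and~3 this is immediate ($\overline{G\cdot x}$ is all of $\RR$, or is itself a closed single $G$-orbit), so the content is in cases~2 and~4, and the remedy is dynamical: the orbit of $x$ always accumulates either on a global fixed point of $G_+$ or on $M$. If $x$ lies beyond a finite $M$, the supremum (or infimum) of the monotone, one-sidedly bounded $G_+$-orbit of $x$ is a global fixed point of $G_+$, whose finite $G$-orbit lies in $\overline{G\cdot x}$. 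If $M$ is a Cantor set and $x$ lies in a bounded gap $I=(a,b)$, the lengths $|g(I)|$ are not bounded below — otherwise $G\cdot a$, a set of left endpoints of pairwise disjoint intervals of length $\ge c>0$, would be closed and discrete, contradicting its density in the perfect set $M$ — so one can make $g(a)$ arbitrarily close to a prescribed point of $M$ while $|g(I)|$ is simultaneously small, forcing $g(x)$ near that point and hence $M\subseteq\overline{G\cdot x}$. A secondary, more tedious point with no analogue on the circle is the bookkeeping in the ``moreover'' statements, in particular separating the discrete-unbounded case from the Cantor case and isolating the single fixed-point-free generator; this is precisely where one must use that all of $G$, not merely the cyclic group it generates, permutes the relevant countable family of gaps.
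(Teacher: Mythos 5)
The paper itself gives no proof of this theorem: it is stated as classical, with the remark that the proof in \cite{ghys-circle} for the circle ``can be easily adapted'' to the line. Your proposal carries out exactly that adaptation, so the route is the intended one, and most of it is sound: the derived-set trichotomy, the $\inf/\sup$ unboundedness arguments, the mutual-exclusivity arguments (a) and (b) (the local finiteness of the family of gaps of a perfect minimal set meeting a second minimal set is precisely the right point), and the accumulation arguments for cases~2 and~4 of the orbit-closure claim are all correct and correctly identified as the places where non-compactness matters.

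There are, however, two concrete gaps, both in case~3. First, the parenthetical claim that in case~3 ``$\overline{G\cdot x}$ \ldots is itself a closed single $G$-orbit'' is false for non-minimal orbits: take $G$ generated by $x\mapsto x+1$ and a non-trivial homeomorphism supported in $(0,1)+\ZZ$ commuting with it; here $\ZZ$ is a discrete minimal set but the orbit of a point of $(0,1)$ is neither closed nor minimal. The conclusion is still true, but the correct argument is that a power of the fixed-point-free element moves any point into the bounded interval between two consecutive points of $M$, after which Lemma~\ref{l.existencia_minimal_intervalo} applies --- i.e.\ exactly the mechanism you invoke only for cases~2 and~4. Second, the closing assertion ``every minimal set is an orbit of $g$'' is the entire non-trivial content of the ``moreover'' clause and you give no argument for it; you also only establish that the image of $G_+\to\ZZ$ is non-trivial, whereas you need it to be all of $\ZZ$ for $M$ itself to be a single $\langle g\rangle$-orbit. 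Both points do follow, but from an argument you never make: minimality plus discreteness forces $G$ to act transitively on $M$, hence (when $G=G_+$) the translation homomorphism $\phi_M:G_+\to\ZZ$ is onto; and for a second minimal set $N$, the constant number $c\ge 1$ of points of $N$ in each gap of $M$ gives $\phi_N=c\,\phi_M$, so surjectivity of $\phi_N$ forces $c=1$ and $N$ is a single $\langle g\rangle$-orbit. Finally, be aware that with orientation-reversing elements transitivity on $M$ does not force surjectivity of $\phi_M|_{G_+}$: the infinite dihedral group $\langle x\mapsto x+2,\ x\mapsto -x+1\rangle$ acts transitively on $\ZZ$, all its fixed-point-free elements are the translations by $2k$, and neither $\ZZ$ nor the minimal set $(\tfrac14+2\ZZ)\cup(\tfrac34+2\ZZ)$ is a single orbit of any of them --- so the ``moreover'' clause must be read for $G_+$ (which is all the paper uses, via Remark~\ref{r.closed_orbit}), and any proof has to make that restriction explicit.
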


\begin{remark}\label{r.closed_orbit}
	Note that in the case of closed orbits as minimal invariant subset, the action is semi-conjugate to an action by isometries, in particular $G_+$ is semi-conjugate to a group of translations.
\end{remark}

\section{Group actions on the line with at most 2 fixed points}\label{s.2fixedpoints}
The main purpose of this section is to prove Theorem \ref{t.2R}. For such we first analyses the situation when the action admits a global fixed point, and this is essentially based on Solodov's theorem (Theorem \ref{t.solodov}).

\begin{lemma}\label{l.2_global_fixed_points}
	Let $G$ be a subgroup of $\homeo_+(\RR)$ with at most $2$ fixed point, admitting a global fixed point. Then $G$ is abelian and any point which is fixed by a non-trivial element is globally fixed. 
\end{lemma}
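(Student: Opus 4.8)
The plan is to restrict the action to the two half-lines determined by a global fixed point and feed the resulting actions into Solodov's theorem (Theorem~\ref{t.solodov}). Fix a global fixed point $p\in\RR$. Since $G\le\homeo_+(\RR)$, every element of $G$ preserves $(p,\infty)$ and $(-\infty,p)$, so restriction gives homomorphisms $\rho^{+}:G\to\homeo_+((p,\infty))$ and $\rho^{-}:G\to\homeo_+((-\infty,p))$. The first observation is that both are injective: an element trivial on $(p,\infty)$ fixes, by continuity, the whole ray $[p,\infty)$, hence has infinitely many fixed points and must be $\idid$; symmetrically for $\rho^{-}$. The second observation is that, for $g\neq\idid$, the point $p$ already accounts for one fixed point, so $g$ has at most one fixed point on $(p,\infty)$ and at most one on $(-\infty,p)$. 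Identifying each half-line with $\RR$, we thus obtain two \emph{faithful} actions of $G$, each with at most one fixed point, and Theorem~\ref{t.solodov} applies to both (all elements are orientation preserving, so the relevant index-$\le2$ subgroup is $G$ itself).

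Consider the dichotomy for $\rho^{+}(G)$. If it has a unique global fixed point and is abelian, then $G$ is abelian (as $\rho^{+}$ is injective), and Corollary~\ref{c.abelian_global_fixed} gives that every fixed point of a nontrivial element is global; so assume instead $\rho^{+}(G)$ is semi-conjugate to an affine action, via a surjection $\theta^{+}:G\to F^{+}\le\aff(\RR)$ and a monotone proper semi-conjugacy $h^{+}:(p,\infty)\to\RR$ (here $F^{+}\le\aff_+(\RR)$ since $h^{+}$ is non-decreasing). I would first argue $\ker\theta^{+}$ is trivial: $h^{+}$ has at most countably many nondegenerate fibres, so all but countably many points of $(p,\infty)$ are singleton fibres of $h^{+}$, hence fixed by every element of $\ker\theta^{+}$; the at-most-one-fixed-point bound then forces $\ker\theta^{+}=\{\idid\}$, so $\theta^{+}$ is an isomorphism $G\cong F^{+}$, and if $F^{+}$ is abelian we are again done. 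Running the same argument on $(-\infty,p)$, the only surviving case is that $G$ is isomorphic, via $\theta^{+}$ and via $\theta^{-}$, to non-abelian subgroups $F^{+},F^{-}\le\aff_+(\RR)$.

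To exclude this I would use the linear-part homomorphism $\pi:\aff_+(\RR)\to\RR_{>0}$; recall a non-abelian subgroup of $\aff_+(\RR)$ contains a hyperbolic element ($\pi\neq1$, one fixed point) and a nontrivial translation ($\pi=1$, no fixed point). The key bookkeeping lemma is: if $\rho^{\pm}(g)$ is fixed-point free then $\pi(\theta^{\pm}(g))=1$, because a fixed-point-free orientation-preserving homeomorphism of $\RR$ moves every point in one direction, and through the monotone surjection $h^{\pm}$ this forces $\theta^{\pm}(g)(y)\geq y$ (resp.\ $\leq y$) for all $y$, which for an affine map means trivial linear part. Consequently, if $\theta^{+}(g)$ is hyperbolic then $\rho^{+}(g)$ has a fixed point in $(p,\infty)$, and exactly one by the bound, so $g$ fixes only $p$ and that point; hence $\rho^{-}(g)$ is fixed-point free and $\pi(\theta^{-}(g))=1$. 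Now choose $g_{1}$ with $\theta^{+}(g_{1})$ hyperbolic and $g_{2}$ with $\theta^{-}(g_{2})$ hyperbolic; then $\pi(\theta^{-}(g_{1}))=1$ and $\pi(\theta^{+}(g_{2}))=1$, so $\pi(\theta^{+}(g_{1}g_{2}))=\pi(\theta^{+}(g_{1}))\neq1$, making $\theta^{+}(g_{1}g_{2})$ hyperbolic; but then the same reasoning forces $\pi(\theta^{-}(g_{1}g_{2}))=1$, whereas $\pi(\theta^{-}(g_{1}g_{2}))=\pi(\theta^{-}(g_{2}))\neq1$ --- a contradiction. Hence $G$ is abelian, and Corollary~\ref{c.abelian_global_fixed} gives the statement about fixed points.

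I expect the main obstacle to be exactly this last part: turning Solodov's ``semi-conjugate to affine'' alternative into something usable requires first showing the semi-conjugacy is faithful (so the affine model genuinely controls $G$), and then finding the two-sided fixed-point count that plays the single admissible fixed point on $(p,\infty)$ against the one on $(-\infty,p)$ to rule out non-abelian behaviour; the reductions in the first paragraph and the invocation of Corollary~\ref{c.abelian_global_fixed} are routine.
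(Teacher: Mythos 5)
Your argument is correct, and its skeleton coincides with the paper's: split the line at the global fixed point $p$, observe that the two restriction homomorphisms are faithful and that each half-line action has at most one fixed point, apply Solodov's theorem on each side, and reach a contradiction when both images are non-abelian affine groups. The genuine difference is in how the contradiction is extracted. The paper takes a single element $g$ that is non-trivial in the abelianization $G/[G,G]$ and asserts that it acts as (a map semi-conjugate to) a homothety on \emph{both} half-lines, so that $g$ acquires a fixed point on each side in addition to $p$, i.e.\ three in total. You instead push everything through the linear-part homomorphism $\pi:\aff_+(\RR)\to\RR_{>0}$: your transfer lemma (fixed-point-free on a half-line forces trivial linear part on that side, hence a hyperbolic image on one side forces a trivial linear part on the other) lets you play two elements $g_1,g_2$ against each other via the product $g_1g_2$. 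This is slightly longer but more robust at exactly the delicate point: being non-trivial in the abelianization of $G$ does not by itself guarantee a non-trivial linear part in a given affine image (a translation can survive in the abelianization, e.g.\ $x\mapsto x+1$ in $\langle x\mapsto 3x,\ x\mapsto x+1\rangle$ has order two in the abelianization), and even an element that is hyperbolic on one side need not be hyperbolic on the other --- your two-sided $\pi$-bookkeeping is designed precisely to avoid having to produce one element that is simultaneously hyperbolic on both sides. Your preliminary step showing $\ker\theta^{\pm}$ is trivial by counting non-degenerate fibres of the semi-conjugacy is also a clean way to dispose of the case where the affine image is abelian, which the paper handles implicitly. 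Both proofs buy the same conclusion; yours trades the paper's brevity for an argument whose every step is checkable locally.
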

\begin{proof}
	Let $x$ be the global fixed point and denote by $G_{-\infty}\le \homeo_+((-\infty,x))$ and $G_{+\infty}\le \homeo_+((x,+\infty))$ the subgroups obtained by considering the restriction of the action of $G$ to the two $G$-invariant half-lines, respectively. Note that the morphisms $G\to G_{\pm\infty}$ are both isomorphisms, as elements in one of the kernels fix a half-line and thus are globally trivial.
	Notice that if either $G_{-\infty}$ or $G_{+\infty}$ is abelian, this implies that $G$ is also abelian and so there is nothing to prove. So we will assume that none of them is abelian, and look for a contradiction. 
	Since every element of $G$ has at most one fixed point other than $x$, it follows that both $G_{-\infty}$ and $G_{+\infty}$ act with at most $1$ fixed point and therefore, by Solodov's theorem (Theorem \ref{t.solodov}), both $G_{-\infty}$ and $G_{+\infty}$ are semi-conjugate to non-abelian subgroups of $\aff_+(\RR)$. Now, any element $g\in G$ which is non-trivial in the abelianization $G/[G,G]$, gives in $G_{-\infty}$ and $G_{+\infty}$ elements which are semi-conjugate to homotheties, and thus $g$ admits at least three fixed points, giving the desired contradiction. 
\end{proof}

We next move to the case where the action has no global fixed points. We first introduce some terminology which will provide the combinatorial set-up for the core of the proof of Theorem \ref{t.2R}: for orientation-preserving homeomorphisms it is natural to consider whenever its graph is above or below the identity, and since we are restricting ourselves to homeomorphisms with at most $N$ fixed points, this information can be encoded in a finite string as in the following definition.

\begin{definition}\label{d.sign_homeo_r}
	Let $h\in\homeo_+(\RR)$ be an orientation-preserving homeomorphism with $\fix(h)=\{x_1, \ldots,x_n\}$, where $x_1<\cdots<x_n$. We say that \emph{$h$ is of type $(\varepsilon_0, ,\ldots,\varepsilon_n)$} for $\varepsilon_k\in\{+,-\}$ if the sign of $h-\idid$ restricted to $(x_k,x_{k+1})$ is $\varepsilon_k$ for every $k\in\{0,\ldots,n\}$ (here we write $x_0=-\infty$ and $x_{n+1}=+\infty$).
\end{definition}

\begin{remark}
	The notion presented in Definition \ref{d.sign_homeo_r} is invariant by conjugation and for every $h\in\homeo_+(\RR)$ and $n\in\NN^*$, $h$ and $h^n$ are of the same type, but $h$ and $h^{-1}$ are of opposite types (every coordinate has the opposite sign).
\end{remark}

We can now discuss the main technical lemma, which describes the case when the group contains an element with two fixed points.

\begin{lemma}\label{l.2_fixed_non_minimal}
	Let $G$ be a subgroup of $\homeo(\RR)$ with at most $2$ fixed points, whose action admits no global fixed point. Assume there exists an element $g\in G$ such that $\fix(g)=\{x,y\}$, with $x<y$. Then, the orbit of $x$ does not intersect the interval $(x,y)$. 
\end{lemma}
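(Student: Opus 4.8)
The plan is to argue by contradiction: suppose $z \in G.x$ lies in the open interval $(x,y)$, say $z = f(x)$ for some $f \in G$. Since $\fix(g) = \{x,y\}$, the element $g$ is of type $(\varepsilon_0, \varepsilon_1, \varepsilon_2)$ with $\varepsilon_1$ a single sign on $(x,y)$; without loss of generality (replacing $g$ by $g^{-1}$ if necessary) we may assume $g > \idid$ on $(x,y)$, so that for every $w \in (x,y)$ the orbit $g^n(w) \to y$ as $n \to +\infty$ and $g^n(w) \to x$ as $n \to -\infty$. The conjugate $h := f g f^{-1}$ then satisfies $\fix(h) = \{f(x), f(y)\} = \{z, f(y)\}$, and $h$ has the same type as $g$. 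The idea is to extract from the interaction of $g$ and $h$ — whose fixed point sets are "interleaved" because $z \in (x,y)$ — an element of $G$ with at least three fixed points, contradicting the standing hypothesis.

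Concretely, I would first pin down the position of the second fixed point $f(y)$ of $h$ relative to $x$ and $y$. If $f(y) \in (x,y)$ as well, then $h$ fixes two points strictly inside $(x,y)$; one then looks at a commutator-type or product element. A cleaner route: consider the point $x$ itself. Since $x = g(x)$ is fixed by $g$ and $z=f(x) \in (x,y)$, apply powers of $g$ to $z$: because $g>\idid$ on $(x,y)$, the sequence $g^n(z)$ increases to $y$. Now $g^n f$ sends $x$ to $g^n(z)$, so the orbit of $x$ accumulates on $y$ from the left while also containing $x$. The key is to produce a single nontrivial element fixing three points. One natural candidate is $[g, h] = g h g^{-1} h^{-1}$ or the element $g^{-1} h$ (or $h g^{-1}$): on the far left $(-\infty, \min)$ and far right $(\max, +\infty)$ the dominant behaviour of $g$ and $h$ agree (same type), so $g^{-1}h$ is forced to be close to the identity there and — using an intermediate-value argument on the three relevant subintervals carved out by $\{x, z, y, f(y)\}$ — must vanish at a point in each, yielding three fixed points.

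I would organize the case analysis by the position of $f(y)$: (i) $f(y) \ge y$, (ii) $f(y) \in (z, y)$, (iii) $f(y) \in (x, z]$, and (iv) $f(y) \le x$; symmetry and replacing $f$ by $g^k f$ should collapse several of these. In the main case one compares $g$ and $h = fgf^{-1}$ on the overlap of their "active" intervals $(x,y)$ and $(z, f(y))$: on $(z, \min(y, f(y)))$ both push points to the right, but the relative order of their fixed points forces the graph of $g^{-1}h$ to cross the diagonal. Counting crossings on $(-\infty, x)$ or $(x, z)$, on the overlap, and on the tail gives the third zero of $g^{-1}h - \idid$.

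The main obstacle I anticipate is controlling the behaviour of the auxiliary element (be it $g^{-1}h$ or a commutator) on the *unbounded* end intervals: there is no a priori reason its graph stays on one side of the diagonal merely because $g$ and $h$ have the same type, since "type" only records signs, not magnitudes, and $h = fgf^{-1}$ could in principle cross back. Handling this likely requires choosing the auxiliary element more carefully — e.g. using that $f(x)=z \in (x,y)$ forces, via the dynamics $g^n(z) \to y$, a *specific* conjugate $g^{-n} h g^{n}$ whose fixed points are squeezed arbitrarily close to $y$, so that comparison with $g$ on a small neighbourhood of $y$ (where $g$'s graph sign is fixed) produces the contradiction locally, avoiding the tails entirely. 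That localization near the fixed point $y$ — where the type of $g$ gives genuine one-sided control — is, I expect, the crux of the argument.
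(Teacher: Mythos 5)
Your opening moves coincide with the paper's: argue by contradiction, reduce to $f$ orientation\mbox{-}preserving, set $h=fgf^{-1}$, and try to manufacture a non-trivial element with three fixed points via the intermediate value theorem. But what you have written is a plan rather than a proof, and the plan stalls exactly where you say it does. Your ``clean route'' via $g^{-1}h$ or $[g,h]$ does not work as stated: as you yourself observe, two maps of the same type give no control on the sign of $g^{-1}h-\idid$ on the unbounded ends, so the count of diagonal crossings there is not available. The paper gets around this not by one uniform trick but by a genuine case analysis on the \emph{type} of $g$ (four cases up to replacing $g$ by $g^{-1}$, crossed with the two positions of $f(y)$ relative to $y$), and the element exhibiting three fixed points is chosen differently in each case: for type $(+,+,+)$ one can evaluate $hg^{-1}$ at the four points $x<f(x)<y<f(y)$ and never touch the tails, whereas for types $(-,+,+)$ and $(+,+,-)$ one must pass to a large power $g^{\pm n}$ and compare with $h$ at auxiliary points such as $x-1$ and $y+1$ to win on one tail. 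Your proposed fix --- squeezing conjugates $g^{-n}hg^{n}$ near $y$ and arguing locally --- is only gestured at and is not carried out in any case.

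The most serious omission is the type $(+,-,+)$ case (type $(-,+,-)$ in your normalization where $g>\idid$ on $(x,y)$). There the paper's construction is qualitatively different: it invokes the hypothesis that the action has \emph{no global fixed point} to produce $u,v\in G_+$ with $u(y)<x$ and $v(x)>y$, builds conjugates $f_u,f_v$ of $g$ whose fixed-point pairs lie entirely to the left of $x$ and entirely to the right of $y$, and then carefully tunes powers to obtain elements $s,t$ whose graphs cross in three separate intervals $(z_1,x)$, $(x,y)$, $(y,z_2)$. Your sketch never uses the no-global-fixed-point hypothesis at all, and it also does not rule out the degenerate sub-case $f(y)=y$ (your case (i) includes it), which the paper excludes by appealing to Lemma \ref{l.2_global_fixed_points} and hence to Solodov's theorem. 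Until the four type cases are each closed with an explicit three-fixed-point element --- and in particular until the $(+,-,+)$ case is handled with some substitute for the $u,v$ construction --- the argument has a genuine gap.
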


\begin{proof}
	First, observe that since $g$ has two distinct fixed points it cannot be an orientation-reversing homeomorphism, so $g\in G_+$. Furthermore, we will suppose that $G_+$ has no global fixed point, otherwise, by Lemma~\ref{l.2_global_fixed_points}, both $x$ and $y$ would be globally fixed by $G_+$ and the orbit of $x$ would be contained in $\{x,y\}$ which satisfies the statement.	
	Now, we will structure the proof by considering all the possible types for $g$. To start with, we argue that it is enough to restrict to the cases where the type of $g$ is $(+,+,+), (-,+,+), (+,+,-)$ and $(+,-,+)$, since all the $2^3$ possible types are reduced to these cases by considering $g^{-1}$.
	
	Arguing by contradiction, let us suppose that there exists $f\in G$ with $f(x)\in (x,y)$. We claim that we can take $f$ order preserving. Indeed, for $\varphi=fgf^{-1}\in G_+$ we have that $\varphi$ fixes the point $f(x)$ in the interior of $(x,y)$, thus either $\varphi(x)\in(x,y)$ or $\varphi^{-1}(x)\in (x,y)$ or $\varphi(x)=x$. The latter case implies that $f^{-1}(x)=y>x$ which is a contradiction then, by replacing $f$ for $\varphi$ or $\varphi^{-1}$, we conclude our claim.
	Now, let $f\in G_+$ with $f(x)\in(x,y)$. In this case $f(y)\neq y$, otherwise the subgroup defined by $\langle g, f \rangle\leq G_+$ would act with at most $2$ fixed points with one of them being globally fixed, and by Lemma \ref{l.2_global_fixed_points}, this would imply that $x$ is also globally fixed by $\langle f, g \rangle$, which is not the case.
	
	\smallskip 
	
	\noindent\textbf{Case 1.(a) $g$ is of type $(+,+,+)$ and $f(y)<y$.}
	Consider $h=fgf^{-1}\in G_+$, it is of type $(+,+,+)$ with $\fix(h)=\{f(x),f(y)\}$ satisfying that $x<f(x)<f(y)<y$, then one can observe that $h(x)\in(x,y)$ but $h(y)>y$, so we take $f'=h$ and we are reduced to the next case 1.(b).
	
	\smallskip 
	
	\noindent\textbf{Case 1.(b) $g$ is of type $(+,+,+)$ and $y<f(y)$.}	
	Consider $h=fgf^{-1}\in G_+$, it is of type $(+,+,+)$ with $\fix(h)=\{f(x),f(y)\}$ satisfying that $x<f(x)<y<f(y)$, therefore:
	\begin{itemize}
		\item $hg^{-1}(x)=h(x)>x$
		\item $hg^{-1}(f(x))<h(f(x))=f(x)$
		\item $hg^{-1}(y)=h(y)>y$
		\item $hg^{-1}(f(y))<h(f(y))=f(y)$
	\end{itemize}
	which implies that $hg^{-1}$ has a fixed point in each of the intervals $(x,f(x))$, $(f(x),y)$ and $(y,f(y))$. 
	This contradicts the assumption that $G$ has at most 2 fixed points.

	\smallskip 
	
	\noindent\textbf{Case 2.(a) $g$ is of type $(-,+,+)$ and $f(y)<y$.} 
	Consider $h=fgf^{-1}\in G_+$, it is of type $(-,+,+)$ with $\fix(h)=\{f(x),f(y)\}$ satisfying that $x<f(x)<f(y)<y$. Take $n\in\NN$ sufficiently large such that $g^{n}(x-1)<h(x-1)$ and $g^n(y+1)>h(y+1)$ then we have: 
	\begin{itemize}
		\item $hg^{-n}(h(x-1))>h(x-1)$
		\item $hg^{-n}(x)=h(x)<x$
		\item $hg^{-n}(y)=h(y)>y$
		\item $hg^{-n}(h(y+1))<h(y+1)$
	\end{itemize}
	which implies that $hg^{-n}$ has a fixed point in each of the intervals $(h(x-1),x)$, $(x,y)$ and $(y,h(y+1))$, which gives the desired contradiction.

	\smallskip 
	
	\noindent\textbf{Case 2.(b) $g$ is of type $(-,+,+)$ and $y<f(y)$.}
	Consider $h=fgf^{-1}\in G_+$, it is of type $(-,+,+)$ with $\fix(h)=\{f(x),f(y)\}$ satisfying that $x<f(x)<y<f(y)$. Then one can observe that $h^{-1}(x)\in(x,y)$ but $h^{-1}(y)<y$, so we take $f'=h^{-1}$ and we are back to the case 2.(a).
	
	\smallskip 
	
	\noindent\textbf{Case 3.(a) $g$ is of type $(+,+,-)$ and $f(y)<y$.} 
	Consider $h=fgf^{-1}\in G_+$, it is of type $(+,+,-)$ with $\fix(h)=\{f(x),f(y)\}$ satisfying that $x<f(x)<f(y)<y$. Take $n\in\NN$ sufficiently large such that $g^{-n}(x-1)<h^{-1}(x-1)$ and $g^{-n}(y+1)>h^{-1}(y+1)$ then we have:
	\begin{itemize}
		\item $h^{-1}g^{n}(h^{-1}(x-1))>h^{-1}(x-1)$
		\item $h^{-1}g^{n}(x)=h^{-1}(x)<x$
		\item $h^{-1}g^{n}(y)=h^{-1}(y)>y$
		\item $h^{-1}g^{n}(h^{-1}(y+1))<h^{-1}(y+1)$
	\end{itemize}
	which implies that $h^{-1}g^n$ has a fixed point in each of the intervals $(h^{-1}(x-1),x)$, $(x,y)$ and \mbox{$(y,h^{-1}(y+1))$,} which again gives the desired contradiction.

	\smallskip 
	
	\noindent\textbf{Case 3.(b) $g$ is of type $(+,+,-)$ and $y<f(y)$.}
	Consider $h=fgf^{-1}\in G_+$, it is of type $(+,+,-)$ with $\fix(h)=\{f(x),f(y)\}$ satisfying that $x<f(x)<y<f(y)$. Then one can observe that $g(x)\in(f(x),f(y))$ but $g(f(y))<f(y)$, so we take $g'=h$ and $f'=g$ and we are back to the case 3.(a).

	\smallskip 
	
	\noindent\textbf{Case 4.(a) $g$ is of type $(+,-,+)$ and $f(y)<y$.} This is by far the most complex case, and its proof will be given by a long construction.
	
	First, we remark that as the action of $G_+$ has no global fixed point, we can find two elements $u,v\in G_+$ such that $u(y)<x$ and $v(x)>y$.
	
	Define the following elements of $G_+$:
	\begin{itemize}
		\item $h=f g f^{-1}$
		of type $(+,-,+)$, with  $\fix(h)=\{f(x),f(y)\}=\{\tilde{x},\tilde{y}\}$
		\item $f_u=u g  u^{-1}$ of type $(+,-,+)$ with $\fix(f_u)=\{u(x),u(y)\}$
		\item $f_v=v g  v^{-1}$ of type $(+,-,+)$ with $\fix(f_v)=\{v(x),v(y)\}$
	\end{itemize}
	These points satisfy the order relation \[u(x)<u(y)<x<\tilde{x}<\tilde{y}<y<v(x)<v(y).\] 
	Now, observe that there exists $n_1\in \mathbb{N}$, such that for every $n\geq n_1$ one has $h^{-n}(x)<f_v^{-1}(x)$, and thus 
	\[h^n f_v h^{-n}(x)<h^n f_v(f_v^{-1}(x))=h^n(x)<\tilde{x}.\] 
	On the other hand, there exists $n_2\in \mathbb{N}$ such that for every $n\geq n_2$ one has $h^{n}f_v(\tilde{y})>y+2$,  and thus 
	\[h^n f_v h^{-n}(y)>h^n f_v h^{-n}(\tilde{y})=h^n f_v(\tilde{y})>y+2.\]	
	Let $n=\max\{n_1,n_2\}$ and define $s=h^n f_v h^{-n}\in G_+$, so we have $s(x)<\tilde{x}$ and $y+2<s(y)$.
	
	Similarly, one can observe that there exists $m_1\in \mathbb{N}$, such that for every $m\geq m_1$ one has $g^{-m}(f_u(x))>\tilde{y}$, and thus 
	\[g^{-m} f_u g^{m}(x)=g^{-m} f_u(x)>\tilde{y}.\] 
	On the other hand, there exists $m_2\in \mathbb{N}$ such that for every $m\geq m_2$ one has $g^{-m}(f_u(y))<y+1$,  and thus 
	\[g^{-m} f_u g^{m}(y)=g^{-m} f_u(y)<y+1.\]	
	Let $m=\max\{m_1,m_2\}$ and define $t=g^{-m} f_u g^{m}\in G$, so we have $\tilde{y}<t(x)$ and $t(y)<y+1$.
	
	We also observe that $\fix(s)=\{h^{n}(v(x)),h^{n}(v(y))\}$ and $\fix(t)=\{g^{-m}(u(x)),g^{-m}(u(y))\}$, and we have \[g^{-m}(u(x))<g^{-m}(u(y))<u(y)<x<y<v(x)<h^{n}(v(x))<h^{n}(v(y)).\]
	Now, let $z_1:=g^{-m}(u(x))\in\RR$, so that $z_1$ is a fixed point of $t$, which is smaller than both fixed points of $s$, which is of type $(+,-,+)$. Therefore, $t(z_1)=z_1$ and $s(z_1)>z_1$.
	Similarly, let $z_2:=h^{n}(v(y))\in\RR$, so that $z_2$ is a fixed point of $s$, which is larger than both fixed points of $t$, which is of type $(+,-,+)$. Therefore, $s(z_2)=z_2$ and $t(z_2)>z_2$.
	This leads to the following inequalities:
	\begin{itemize}
		\item $t(z_1)=z_1<s(z_1)$
		\item $t(x)>\tilde{y}>\tilde{x}>s(x)$
		\item $t(y)<y+1<y+2<s(y)$
		\item $t(z_2)>z_2=s(z_2)$
	\end{itemize}
	which implies that $ts^{-1}\in G$ is a non-trivial element with at least one fixed point in each of the intervals $(z_1,x)$, $(x,y)$ and $(y,z_2)$, adding up to at least $3$ fixed points.
	This contradicts the hypothesis that $G$ has at most $2$ fixed points. 
	
	\smallskip 
	
	\noindent\textbf{Case 4.(b) $g$ is of type $(+,-,+)$ and $y<f(y)$.}
	Consider $h=fgf^{-1}\in G_+$, it is of type $(+,-,+)$ with $\fix(h)=\{f(x),f(y)\}$ satisfying that $x<f(x)<y<f(y)$. Then one can observe that $h(x)\in(x,y)$ but $h(y)<y$, so we take $f'=h$ and we are reduced to the previous case 4.(a).

	As we have covered all possible situations,  we conclude that if there exists an element $g\in G$ such that $\fix(g)=\{x,y\}$, then the orbit of $x$ by $G$ does not intersect the interval $(x,y)$.
\end{proof}

\begin{proof}[Proof of Theorem \ref{t.2R}]
	If the action of $G_+$ admits a global fixed point, then Lemma \ref{l.2_global_fixed_points} implies that $G_+$ is abelian. 
	We assume next that the action of $G_+$ has no global fixed point. If $G$ has at most 1 fixed point, then we conclude by Solodov's theorem (Theorem \ref{t.solodov}) that the action is semi-conjugate to an affine action. 
	When $G$ contains elements with exactly two fixed points, Lemma \ref{l.2_fixed_non_minimal} implies that any interval of the form $(x,y)$, where $x,y$ are fixed by some non-trivial element, is wandering. Now, the complement $\RR\smallsetminus G.(x,y)$ contains a minimal invariant subset which is either discrete or uncountable. From Remark~\ref{r.closed_orbit}, if the minimal is discrete then $G_+$ is semi-conjugate to a group of translations, thus it is abelian and free of fixed points. For the latter case, a non-decreasing map $h:\RR\to\RR$ that collapses all these intervals to single points defines a semi-conjugacy of the action of $G$ to an action with at most 1 fixed point, so that we are reduced to the previous case.
\end{proof}

\bibliographystyle{plain}
\bibliography{biblio}

\smallskip

\noindent \textit{João Carnevale \\
Institut de Mathématiques de Bourgogne (IMB, UMR, CNRS 5584) \\
Université de Bourgogne \\
9 av.\ Alain Savary, 21000 Dijon, France\\}
\texttt{joao.carnevale@u-bourgogne.fr}

 \end{document}